\documentclass{article}
\usepackage{maa-monthly}

 \newcommand{\N}{\mathbb{N}}

 \newcommand{\beq}{\begin{eqnarray*}}
 \newcommand{\eeq}{\end{eqnarray*}}

\theoremstyle{theorem}
\newtheorem{thm}{Theorem}

\newtheorem{lem}[thm]{Lemma}
\newtheorem{prop}[thm]{Proposition}

\theoremstyle{definition}
\newtheorem{defn}[thm]{Definition}
\numberwithin{equation}{section}

\begin{document}
\title{A Novel Proof for Kimberling's Conjecture on Doubly Fractal Sequences}
\markright{A Novel Proof for Kimberling's Conjecture}
\author{Matin Amini and Majid Jahangiri}

\maketitle

\begin{abstract}
A sequence is a fractal sequence if it contains itself as a proper subsequence. (The self-containment property resembles that of visual fractals) A doubly fractal sequence of integers is defined by operations called upper trimming and lower trimming. C. Kimberling proved that signature sequences are doubly fractal and conjectured the converse. This article gives a procedure for constructing doubly fractal sequences and proves Kimberling's conjecture.
\end{abstract}


\section{Introduction}
Geometric fractals are characterized by visual self-similarity. That is, zooming on a visual fractal shows that the original structure is repeated inside itself infinitely many times. Clark Kimberling noticed that a sequence of integers can also contain infinitely many copies of itelf in two different ways \cite{4}, and he studied properties of such sequences \cite{1,2,3,5,6}. Briefly, operations called upper trimming and lower trimming, applied to suitable sequences $S$, leave behind the sequence $S$ itself. An integer sequence of this sort is called a doubly fractal sequence. Signature sequences as defined in Section \ref{Def}, are known to be doubly fractal. In this article we introduce a procedure for constructing doubly fractal sequences, and then we apply the procedure to prove Kimberling's conjecture that the signature sequences are the {\it only} doubly fractal sequences.\\
The structure of this article is as follows: definitions and required preliminaries are given in Section \ref{Def}. The constructive procedure is given in Section \ref{The Construction}, and the relationship between the doubly fractal sequences and the signature sequences is discussed in Section \ref{Rel}.

\section{Definitions and Preliminaries}\label{Def}
Suppose $ S=(s_1, s_2, s_3, \ldots)$ is a finite or infinite sequence of numbers in the set $\N$ of positive integers. The $k$th term of a sequence $S$ is denoted by $S(k)$ or $s_k$.
\begin{defn} Suppose every positive integer occurs in $S$. The upper trimmed (sub)sequence of $S$, denoted by $\wedge_S$, is the sequence that remains after every first occurrence of every positive integer is removed from $S$.
\end{defn}
\begin{defn}
Suppose every positive integer occurs in $S$. The lower trimmed (sub)sequence of $S$, denoted by $\vee_S$, is the sequence that remains after $1$ is subtracted from every term of $S$ and then all $0$s are removed.
\end{defn}
\begin{defn}\label{defn3}
A sequence $S$ is a doubly fractal sequence if $s_1=1$ and $\wedge_S=\vee_S=S$.
\end{defn}
\begin{defn}
A sequence $(1,2,\ldots,n,m,\ldots)$ in $\N$ where $m \neq n$ is an {\it initial segment of type 1}. A sequence $(1,1,\ldots,1,m,\ldots)$ in $\N$, where $m>1$, is an {\it initial segment of type 2}. 
\end{defn}
Now suppose every positive integer occurs at least once in $S$. The index of the $k$th occurrence of $n$ is denoted by $I_k(n)$.
\begin{lem}\label{4}
Suppose $S$ is a doubly fractal sequence. Then $S$ has an initial segment of type 1 or 2, and, {\it a fortiori}, that segment is one of these two forms:
$(1,2,3,\ldots n,1)$ or $(\underset{n}{\underbrace{1,1,\ldots,1}},2)$.
\end{lem}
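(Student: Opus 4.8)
The plan is to treat the two self-containment identities $\vee_S=S$ and $\wedge_S=S$ as complementary tools: the lower-trimming identity dictates \emph{which} values may appear and in what order, while the upper-trimming identity prevents $S$ from being strictly increasing and so forces the opening run to terminate. First I would extract the precise content of $\vee_S=S$. Since $\vee_S$ is formed by deleting every term equal to $1$ and subtracting $1$ from each surviving term, its $j$th entry is one less than the value of the $j$th term of $S$ that exceeds $1$. Writing $b_j$ for the value of that $j$th above-$1$ term, the identity $\vee_S=S$ becomes $b_j=S(j)+1$ for all $j$. As $S(1)=1$, this yields $b_1=2$: the first term of $S$ exceeding $1$ is necessarily $2$.

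Next I would branch on $s_2$. If $s_2=1$, then $s_1=s_2=1$, so the first above-$1$ term (which is $2$) sits at some index $n+1\ge 3$; reading the leading block of $1$'s followed by this $2$ exhibits the initial segment $(\underbrace{1,\ldots,1}_{n},2)$ with $n\ge 2$, which is of type 2. If instead $s_2\ge 2$, then $s_2$ is itself the first above-$1$ term, whence $s_2=b_1=2$ and we are in the type 1 situation.

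For the type 1 branch the heart of the argument is an induction showing that an increasing opening run can only be $1,2,3,\ldots$. Suppose $s_1,\ldots,s_r=1,2,\ldots,r$. Among these there are exactly $r-1$ terms exceeding $1$, so the next above-$1$ term is the $r$th such term and must equal $b_r=S(r)+1=r+1$. Consequently $s_{r+1}\in\{1,r+1\}$: if $s_{r+1}\ge 2$ it is pinned to $r+1$ and the run extends, while any break in the pattern forces $s_{r+1}=1$. To finish I would rule out an everlasting run by invoking $\wedge_S=S$: a strictly increasing $S$ has each value occurring once, so $\wedge_S$ removes every term and cannot reproduce the infinite sequence $S$. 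Hence the run halts at some $r\ge 2$ with $s_{r+1}=1$, producing the type 1 segment $(1,2,3,\ldots,r,1)$.

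I expect the only real care to lie in the bookkeeping behind the lower-trimming identity—correctly aligning the index $j$ of $S(j)$ with the position of the $j$th above-$1$ term—rather than in any genuinely hard step; once $b_j=S(j)+1$ is in hand, both the dichotomy on $s_2$ and the termination of the increasing run fall out directly.
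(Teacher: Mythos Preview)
Your argument is correct and follows essentially the same route as the paper: split on $s_2$, use $\vee_S=S$ to force the first non-$1$ value to be $2$ and to constrain any increasing opening run to $1,2,\ldots,r$, then invoke $\wedge_S=S$ to terminate that run. Your packaging via the identity $b_j=S(j)+1$ and your global termination argument (a strictly increasing $S$ would leave $\wedge_S$ empty) are somewhat cleaner than the paper's local index check $\wedge_S(n)\ge n+1$, but the underlying logic is the same.
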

\begin{proof} First $s_1=1$ by Definition \ref{defn3}. We continue with two cases.
Case 1: $s_2 \neq 1$. If $s_2>2$, then $\vee_S(1)=s_2-1>1 \neq 1$, so that necessarily $s_2=2$. Let $n$ be the greatest integer $k$ for which $s_k=k$. If $s_{n+1} \geq n+1$, then $\wedge_S(n) \geq n+1 \neq n=s_n$, contrary to $\wedge_S=S$. Therefore, $s_{n+1}=1$ as desired.\\
Case 2: $s_2=1$. Suppose $s_k=1$ for $k=1,2,\ldots,n$ and $s_{n+1} \neq n$. If $s_{n+1}>2$, then $\vee_S(1)=s_{n+1}-1>1=s_1$, contrary to $\vee_S=S$. Therefore $s_{n+1}=2$. 
\end{proof}
\noindent The sequence below is an example of a doubly fractal sequence which the second term is not 2.
\beq
S=(1,1,1,2,1,2,1,2,3,1,2,3,1,2,\ldots).
\eeq

\begin{defn}
Let $\theta$ be a real number and let $M_\theta=\{i+j\theta \hbox{ } : \hbox{ } i, j \in \N \}$, a multiset depending on $\theta$. We arrange the numbers in $M_{\theta}$ in nondecreasing order(with duplicates if and only if $\theta$ is rational) so that the ordered set can be represented as
\beq
M_{\theta}=\left(s_h+a_h\theta\right)_{h=1}^\infty.
\eeq
The sequence $S_\theta=(s_1,s_2,s_3,\ldots)$ is the {\it signature (sequence) of $\theta$}. For example,
\begin{align*}
S_{\sqrt13}=(1,2,3,4,1,5,2,6,3,7,4,8,1,5,9,2,6,10,3,7,11,4,8\ldots),
\end{align*}
and the signature of $1/7$ is
\begin{align*}
S_{1/7}=(1,1,1,1,1,1,1,2,1,2,1,2,1,2,1,2,1,2,1,2,1,3,2,1\ldots).
\end{align*}
\end{defn}
The next proposition is proved in \cite{1}.
\begin{prop}\label{dsprop}
The signature of a positive number $\theta$ is a doubly fractal sequence.
\end{prop}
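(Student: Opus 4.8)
The plan is to show that applying each trimming operation to the signature $S_\theta$ returns $S_\theta$ itself. I would first make the structure of $M_\theta$ explicit: the $h$th smallest element is $s_h + a_h\theta$, and the pair $(s_h,a_h)$ ranges over all of $\N \times \N$ as $h$ runs over $\N$, since $M_\theta$ is exactly the multiset $\{i+j\theta : i,j \in \N\}$ listed in nondecreasing order. Thus the signature term $s_h$ records the \emph{first coordinate} $i$ of the element $i+j\theta$ sitting in position $h$, and there is a companion sequence $(a_h)$ recording the \emph{second coordinate} $j$. The whole argument reduces to understanding how the two trimming operations act on this coordinate data.

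\emph{Lower trimming.} I would observe that $\vee_{S_\theta}$ is obtained by deleting every term equal to $1$ and subtracting $1$ from the rest. A term $s_h = 1$ corresponds to an element $1 + a_h\theta \in M_\theta$, and subtracting $1$ from a surviving term $s_h \geq 2$ turns the label $i = s_h$ into $i-1$, i.e. it identifies the element $i + j\theta$ with $(i-1) + j\theta$. The key point is that the map $i + j\theta \mapsto (i-1) + j\theta$, restricted to elements with $i \geq 2$, is an order-preserving bijection from $\{i+j\theta : i \geq 2\}$ onto all of $M_\theta$. Hence the nondecreasing order is preserved, and the sequence of first coordinates after trimming is again exactly $(s_h)$; that is, $\vee_{S_\theta} = S_\theta$.

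\emph{Upper trimming.} For $\wedge_{S_\theta}$ I must remove the first occurrence of each positive integer. The first occurrence of the integer $i$ in $S_\theta$ is the position of the \emph{smallest} element of $M_\theta$ whose first coordinate is $i$, namely $i + 1\cdot\theta$ (the element with $j=1$). So upper trimming deletes exactly the elements $\{i + 1\cdot\theta : i \in \N\}$, i.e. those with second coordinate $j=1$. The surviving elements are $\{i + j\theta : i \in \N,\ j \geq 2\}$, and now the relevant bijection is $i + j\theta \mapsto i + (j-1)\theta$, which preserves the first coordinate $i$. I would check that this map is an order-preserving bijection onto $M_\theta$, so that the remaining first coordinates, read in order, reproduce $(s_h)$ exactly; hence $\wedge_{S_\theta} = S_\theta$. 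Finally $s_1 = 1$ because the smallest element of $M_\theta$ is $1 + 1\cdot\theta$.

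The main obstacle, and the step deserving the most care, is the order-preservation claim for the two coordinate shifts, particularly in the upper-trimming case where one must verify that the first occurrence of $i$ really is the $j=1$ element (so that deleting the $j=1$ layer coincides with deleting first occurrences) and that the shift $j \mapsto j-1$ does not disturb the relative order of surviving elements. When $\theta$ is rational there are duplicate values in $M_\theta$, so I would need a fixed tie-breaking convention for the ordering and check that both bijections respect it; this bookkeeping, rather than any deep idea, is where the proof is most easily gotten wrong.
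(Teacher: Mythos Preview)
Your argument is correct. The paper itself does not supply a proof of this proposition; it simply cites Kimberling's original paper \cite{1}, so there is no in-paper proof to compare against. What you have sketched is precisely the standard argument (and, as far as I can tell, the one in \cite{1}): identify lower trimming with the coordinate shift $(i,j)\mapsto(i-1,j)$ on $\{(i,j):i\ge 2\}$ and upper trimming with $(i,j)\mapsto(i,j-1)$ on $\{(i,j):j\ge 2\}$, and observe that each is an order-preserving bijection onto $M_\theta$ that does the right thing to the first coordinate.

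Your caveat about the rational case is well placed but easily discharged. The paper's convention, visible in the example $S_{1/7}$, is that among equal values of $i+j\theta$ the one with the smaller $j$ is listed first. Both of your shifts preserve the value of $j'-j$ for any tied pair, so the tie-breaking order is preserved; and since the elements $i+\theta$ with $j=1$ are strictly smaller than all other $i+j\theta$ with the same $i$, and any tie between $i+\theta$ and $i'+j'\theta$ with $j'\ge 2$ is resolved in favour of $j=1$, the first occurrence of $i$ really is the $j=1$ element. So the bookkeeping goes through without incident.
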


\begin{thm}\label{6}
Distinct numbers have distinct signatures.
\end{thm}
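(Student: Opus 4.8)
The plan is to prove the statement directly, by reconstructing from a signature sequence the entire order in which the pairs $(i,j)$ are listed, and then exhibiting a single pair of lattice points whose relative order is forced to differ once $\theta\neq\theta'$. The bookkeeping observation I would start from is that for a fixed first coordinate $v$, the numbers $v+j\theta$ with $j=1,2,3,\ldots$ are strictly increasing in $j$ (as $\theta>0$), so among all elements of $M_\theta$ whose first coordinate equals $v$, they occur in $S_\theta$ in order of increasing $j$. Consequently the companion index $a_h$ in the representation $M_\theta=(s_h+a_h\theta)_h$ is exactly the number of indices $h'\le h$ with $s_{h'}=s_h$; that is, $a_h$ is determined by the sequence $S_\theta$ alone. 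Hence the signature recovers the full list of pairs $(s_h,a_h)$, i.e. it recovers the total order $<_\theta$ on $\N\times\N$ given by $(i,j)<_\theta(i',j') \iff i+j\theta<i'+j'\theta$ (with whatever fixed tie-breaking is used in the rational case). In particular, $S_\theta=S_{\theta'}$ would force $<_\theta$ and $<_{\theta'}$ to coincide.

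The next step is to produce a disagreement between $<_\theta$ and $<_{\theta'}$ whenever $\theta\neq\theta'$. Assume without loss of generality that $\theta<\theta'$; since both are positive, I would choose a rational $p/q$ with $p,q\in\N$ and $\theta<p/q<\theta'$, and consider the two lattice points $(1,q+1)$ and $(p+1,1)$, both having positive integer coordinates. A direct subtraction gives $\bigl(1+(q+1)\theta\bigr)-\bigl((p+1)+\theta\bigr)=q\theta-p=q(\theta-p/q)$, which is negative at $\theta$ and positive at $\theta'$. Thus $(1,q+1)<_\theta(p+1,1)$ while $(p+1,1)<_{\theta'}(1,q+1)$, so the two orders genuinely disagree, and by the previous paragraph $S_\theta\neq S_{\theta'}$.

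I expect the crux of a clean write-up to be the reconstruction in the first paragraph rather than the flip, which is easy: every positive rational arises as a crossover value $(i'-i)/(j-j')$ of two admissible lattice points, so one separating rational suffices. The point deserving care is justifying that the sequence of first coordinates alone pins down the companion second coordinates, and confirming that this works verbatim in the rational case, where distinct pairs may share a common value of $M_\theta$. Here the key is that elements sharing a first coordinate never tie with one another, so the occurrence-counting recovery of $a_h$ is unaffected by however ties between different first coordinates are resolved; the argument therefore goes through uniformly for rational and irrational $\theta$.
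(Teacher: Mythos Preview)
Your proof is correct and follows the same overall strategy as the paper: first argue that equal signatures force the two total orders on $\N\times\N$ induced by $i+j\theta$ to coincide, then exhibit a single pair of lattice points whose relative order flips between $\theta$ and $\theta'$.

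There are two differences worth noting. First, you are more explicit about the reconstruction step: the paper simply asserts that $e+f\alpha<g+h\alpha$ iff $e+f\beta<g+h\beta$ whenever $S_\alpha=S_\beta$, whereas you justify it by observing that $a_h$ is the occurrence count of $s_h$ up to position $h$ and hence is determined by $S_\theta$ alone. This is a genuine gap you have filled, and your remark about ties in the rational case is a nice touch. Second, for the flip the paper works with the decimal expansions of $\alpha$ and $\beta$ to locate a separating rational of the form $N/10^{m+1}$, while you invoke density of the rationals to pick any $p/q$ with $\theta<p/q<\theta'$ and then compare $(1,q+1)$ with $(p+1,1)$. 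Your version is cleaner and avoids the bookkeeping of matching decimal digits; the paper's version is more concrete but comes to the same thing.
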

\begin{proof}
Prove this by contradiction. Without loose of generality, assume that $\alpha<\beta$. Denote the decimal representation of these numbers as $\alpha=a.a_1a_2a_3\ldots a_m\ldots$ and $\beta=b.b_1b_2b_3\ldots b_m\ldots$, in which $a_k=b_k$, for all $k\leq m$ and $a_{m+1}\neq b_{m+1}$. Now for every $e,f,g,h \in \N$ we have $e+f\alpha<g+h\alpha$ if and only if $e+f\beta<g+h\beta$. Equivalently $g-e+(h-f)\alpha>0$ if and only if $g-e+(h-f)\beta>0$. So we can write $(-bb_1b_2b_3\ldots b_mb_{m+1})+\beta10^{m+1} \geq 0$ but we have $(-bb_1b_2b_3\ldots b_mb_{m+1})+\alpha10^{m+1}<0$, a contradiction.
\end{proof}
\section{The Construction Procedure}\label{The Construction}
In this section we explain a procedure which constructs doubly fractal sequences. The initial segment of a doubly fractal sequence $S$ is either $(1,2,3,\ldots,n,1)$ or $(\underset{n}{\underbrace{1,1,\ldots,1}},2)$,  by Lemma \ref{4}. We assume the former case, and the latter is translated to the former. Start with $S=(1,2,3,\ldots,n)$, we extend this sequence in a way that in each step the result is always a doubly fractal (finite) sequence. The terms of this initial segment is referred by {\it the main terms}. A subsequence starts from a $1$ term until the next $n$ term is called a {\it block}. In each step, the least positive integer which has not appeared until then in $S$ is denoted by $\ell_S$.

Extend $S$ by rewriting the main terms after the initial block, term by term and in each step insert $\ell_S$ after each main term except $n$, i.e., we have
\beq
S=(1,2,3,\ldots,n,1,n+1,2,n+2,3,n+3,\ldots,n-1,2n-1,n).
\eeq
We start with $A=(1,2,3,4)$ and we have $A=(1,2,3,4,1,5,2,6,3,7,4)$. The next $\ell_S$ is $2n$. Until now the sequence $S$ is extended two blocks. To construct $b$th block, $b\geq 3$, we proceed as follows. Constitute the sequence $t$ from the terms between $n-1$ and $n$ exclusive in the $(b-1)$th block of $S$ and add $1$ to all of them, and the sequence $t'$ from the terms between last $n+1$ and its previous $n$ term, exclusive (e.g., in sequence $A$ we have $t=(8)$ and $t'=(1)$). In other words, for each $k>2$,
$t=(S(I_k(n-1)+1)+1,\ldots,S(I_k(n)-1)+1)$ and
$t'=(S(I_{k-1}(n)+1),\ldots,S(I_{k-1}(n+1)-1)).$
\begin{prop}\label{7}
The $\ell_S$ and $1$ appear once in $t$ and $t'$, respectively, and all of other terms of $t$ and $t'$ are the same.
\end{prop}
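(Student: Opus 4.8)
The plan is to analyze the block-construction procedure by tracking how the doubly fractal property transfers structure between consecutive blocks. Let me think about what $t$ and $t'$ actually are and why they agree except in one spot.

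Let me set up the situation carefully. We start with $S = (1,2,\ldots,n)$ and extend it block by block so that at each finite stage $S$ remains doubly fractal. A block runs from a $1$ to the next $n$. The sequence $t$ is formed from the terms strictly between $n-1$ and $n$ in the $(b-1)$th block, with $1$ added to every term. The sequence $t'$ is formed from the terms strictly between the last $n+1$ and the $n$ immediately preceding it.

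Let me think about the two fractal operations. The doubly fractal condition says $\wedge_S = \vee_S = S$. The lower trimming $\vee_S$ subtracts $1$ from every term and deletes the resulting $0$s, i.e. it deletes all $1$'s and decrements the rest. The upper trimming $\wedge_S$ deletes the first occurrence of each positive integer.

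So I want to understand $t$ and $t'$ via these two operations.

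Consider $t$: it is built from the terms strictly between $n-1$ and $n$ in block $b-1$, then incremented by $1$. The increment-by-$1$ operation is the inverse direction of lower trimming. If I think about what $\vee_S = S$ means: the block structure of $\vee_S$ must match $S$. Applying $\vee_S$ decrements each term and drops $1$'s. So a block of $S$ (from $1$ to $n$) after lower trimming loses its $1$'s and becomes... I should match the terms between $n-1$ and $n$ (decremented appropriately) to the terms between $n-2$ and $n-1$ somewhere. Adding $1$ undoes the decrement, which should let me identify $t$ with a specific segment of $S$ — something like the terms between $n$ and $n+1$ in block $b$, or the initial portion of block $b$.

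Consider $t'$: it is the terms between the $n$ and the following $n+1$, i.e. the segment right after a main-term $n$. The upper trimming $\wedge_S = S$ deletes first occurrences. The first occurrence of $n+1$ sits right where we inserted it after the first main-term $n$ in block $2$; subsequent occurrences of $n+1$ are not first occurrences. So $t'$ should be governed by $\wedge_S$: the segment from $n$ to the next $n+1$, when we remove the first occurrences that $\wedge_S$ deletes, should reproduce a canonical segment of $S$.

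The plan of attack. First I would make precise, using $\vee_S = S$, that $t$ equals the block-$b$ prefix consisting of the terms from position $2$ up to the first appearance of $\ell_S$ (the newly inserted value) — that is, $t$ is a copy of an earlier segment together with one extra occurrence of $\ell_S$, the least not-yet-appeared integer. Symmetrically, using $\wedge_S = S$, I would show $t'$ equals the same earlier segment together with one extra occurrence of $1$ (since $1$ is always a first occurrence deleted by upper trimming, it is the value that $t'$ carries that $\wedge_S$ strips). Concretely I would track, for each position, the image under the two trimming maps and argue that after removing the single $\ell_S$ from $t$ and the single $1$ from $t'$, both reduce to the identical subsequence of $S$ already constructed. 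The base cases $b=3$ (where $t=(8)$, $t'=(1)$ in the example $A$) anchor an induction on $b$.

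The main obstacle I expect is bookkeeping the index shifts: translating the positional descriptions $t=(S(I_k(n-1)+1)+1,\ldots,S(I_k(n)-1)+1)$ and $t'=(S(I_{k-1}(n)+1),\ldots,S(I_{k-1}(n+1)-1))$ into statements about $\vee_S$ and $\wedge_S$ requires carefully correlating the index of the $k$th occurrence of $n-1$ and of $n$ in one block with the corresponding indices one block earlier, and confirming that the single inserted $\ell_S$ in $t$ and the single $1$ in $t'$ occupy matching positions so that the two sequences line up term-for-term elsewhere. The delicate point is verifying that $\ell_S$ genuinely appears exactly once in $t$ (it is brand new, so it cannot recur) and that $1$ appears exactly once in $t'$ (the segment lies strictly inside a single block, which contains only one $1$); I would isolate these two uniqueness claims as the crux and handle the remaining term-by-term equality by the inductive correspondence above.
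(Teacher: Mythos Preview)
Your overall strategy is the paper's: send the two segments, via the two trimming operations, to a common already-constructed piece of $S$ and read off the conclusion. Concretely the paper takes $T=(S(I_k(n-1)),\ldots,S(I_k(n)))$ and $T'=(S(I_{k-1}(n)),\ldots,S(I_{k-1}(n+1)))$ and observes that $\wedge(T)$ and $\vee(T')$ both equal the stretch between the $(k-1)$th $n-1$ and the $(k-1)$th $n$. Upper-trimming removes from $T$ only its single first-occurrence term (the freshly inserted value in block $b-1$), and lower-trimming removes from $T'$ only its single $1$ while decrementing the rest; adding $1$ back then gives $t\setminus\{\ell_S\}=t'\setminus\{1\}$.

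Two places where your write-up drifts. First, $t$ is not a ``block-$b$ prefix'': it is built from block $b-1$ (the interior between the $(b-1)$th $n-1$ and the $(b-1)$th $n$, then incremented), and the common target that $t$ and $t'$ both reduce to lies in block $b-2$. Your phrasing suggests you are trying to run $\vee^{-1}$ forward on $T$ into the not-yet-constructed block; the paper instead applies $\wedge$ to $T$ and $\vee$ to $T'$, both going \emph{backward} to something already built, which is what makes the comparison work without circularity. Second, you flag as the ``crux'' that $\ell_S$ in $t$ and $1$ in $t'$ occupy matching positions. They need not, and the proposition does not claim it: the very next paragraph of the construction splits into the cases $Indx_t(\ell_S)=Indx_{t'}(1)$ and $Indx_t(\ell_S)\neq Indx_{t'}(1)$. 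All that is required is that deleting $\ell_S$ from $t$ and $1$ from $t'$ yields the same sequence, and this follows directly from $\wedge(T)=\vee(T')$ with no positional hypothesis. With those two corrections your outline is the paper's argument.
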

\begin{proof}
For the sequence $t'$, from Properties (II) and (III), for any $k>1$, terms between $S(I_k(n))$ and $S(I_k(n+1))$ should turn to terms between $S(I_k(n-1))$ and $S(I_k(n))$ by lower-trimming, and should turn to terms between $S(I_{k-1}(n))$ and $S(I_{k-1}(n+1))$ by upper-trimming. So the terms between $S(I_2(n))$ and $S(I_2(n+1))$ are upper-trimmed to the terms between $S(I_1(n))$ and $S(I_1(n+1))$, which is only the $1$ term. In the case of the sequence $t$, only one $\ell_S$ is inserted between each main term, especially $n-1$ and $n$, by the construction. So there's one $1$ term in the subsequence $T'=(S(I_{k-1}(n)),\ldots,S(I_{k-1}(n+1)))$ and one $\ell_S$ term in the subsequence $T=(S(I_k(n-1)),\ldots,S(I_k(n)))$. The lower-trimming of $S$ turn the terms of $T'$ to that terms of $S$ in which the terms of $T$ will turn to them when $S$ is upper-trimmed. Therefore all elements of $T$ and $T'$ other than $\ell_S$ and $1$ are the same.
\end{proof}

We denote by $Indx_t(\ell_S)$ and $Indx_{t'}(1)$ the position of $\ell_S$ and $1$ in the sequences $t$ and $t'$, respectively. We constitute a new sequence $P$ from $t$ and $t'$ by merging them as follows. Write all terms of $t'$ into the $P$ except 1. If $Indx_t(\ell_S)=Indx_{t'}(1)$ then insert one of the subsequences $(1,\ell_S)$ or $(\ell_S,1)$ in the position $Indx_t(\ell_S)$ into the $P$ sequence. For example, in the case of $A=(1,2,3,4,1,5,2,6,3,7,4)$, the next $\ell_S$ will be 8 and $P$ would be $(1,8)$ or $(8,1)$. On the other hand, if $Indx_t(\ell_S)\neq Indx_{t'}(1)$ then insert the $\ell_S$ and $1$ simultaneously into the $P$ in the positions $Indx_t(\ell_S)$ and $Indx_{t'}(1)$, respectively. For example, if $t=(a_1,\ell_S,a_2,a_3,a_4)$ and $t'=(a_1,a_2,a_3,1,a_4)$ then $P=(a_1,\ell_S,a_2,a_3,1,a_4)$. Set $d=Indx_P(1)-Indx_P(\ell_S)$. In $P$, delete the term $1$ and all the terms after $1$ ($P$ may become a null sequence). To extend the sequence $S$, append $P$ to the end of $S$. After that we want to rewrite the terms of last block, term by term such that if the appended term is a main term $m$, then insert the new $\ell_S$ into the position $Indx_S(\ell_S)=Indx_S(m)-d$ if and only if $I_B(1)\leq Indx_S(\ell_S)\leq I_B(n)$. In The case of our example sequence $A$, $P=()$ and $d=-1$, so it is extended as
\begin{align*}
A=(&1,2,3,4\\
&1,5,2,6,3,7,4\\
&1,8,5,2,9,6,3,10,7,4).
\end{align*}
For the next extension, $t=(11,8)$ and $t'=(1,8)$. Let $P=(11,1,8)$ and so $d=1$. Deleting $1$ and the terms after $1$ from $P$ and extending the sequence $A$ results that
\begin{align*}
A=(&1,2,3,4,\\
&1,5,2,6,3,7,4,\\
&1,8,5,2,9,6,3,10,7,4,11,\\
&1,8,5,12,2,9,6,13,3,10,7,14,4).
\end{align*}
At this point $t=(11,8,15)$ and $t'=(11,1,8)$, so $Indx_{t}(\ell_S)\neq Indx_{t'}(1)$. Therefore $P=(11,1,8,15)$ and then $d=-2$. Deleting $1$ and the terms after $1$ from $P$ and appending it to the end of $A$ demonstrates
\begin{align*}
A=(&1,2,3,4,\\
&1,5,2,6,3,7,4,\\
&1,8,5,2,9,6,3,10,7,4,11,\\
&1,8,5,12,2,9,6,13,3,10,7,14,4,11,\\
&1,8,15,5,12,2,9,16,6,13,3,10,17,7,14,4).
\end{align*}
Note that since $d=-2$, $\ell_S$'s are inserted two positions after each main term except for $n$ term.

Now assume that we start with the latter possibility of initial segment, i.e., $S=(\underset{n}{\underbrace{1,1,\ldots,1}},2)$. In this case first we construct the doubly fractal sequence started by $S'=(1,2,\ldots,n)$ and then extend $S$ by computing $k$th term of $S$, $k>n+1$, by the formula
\beq
S(k)=\# \{s'_i \mid  s'_i=s'_k, 1\leq i\leq k \}.
\eeq
For example, beginning from $A'=(1,1,1,1,2)$ then the extended sequence will be
\begin{align*}
A'=(&1,1,1,1,\\
&2,1,2,1,2,1,2,\\
&3,1,2,3,1,2,3,1,2,3,1,\\
&4,2,3,1,4,2,3,1,4,2,3,1,4,1,\\
&5,3,1,4,2,5,3,1,4,2,5,3,1,4,2,5).
\end{align*}
\section{The Main Result}\label{Rel}
In this section we discuss the correspondence of the family of doubly fractal sequences and the the family of signature sequences. In \cite{4} C. Kimberling conjectured that these two families should be the same. Here we try to give a proof to this conjecture.
\begin{lem}\label{8}
The $n+2$ first terms of a doubly fractal sequence can be the initial segment of infinitely many signature sequences.
\end{lem}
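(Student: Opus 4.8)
The plan is to determine exactly what the first $n+2$ terms of a doubly fractal sequence must be, and then to exhibit a whole interval of parameters whose signatures begin with that block. By Lemma \ref{4} the first $n+1$ terms are $(1,2,\ldots,n,1)$ or $(\underbrace{1,\ldots,1}_n,2)$, so only $s_{n+2}$ remains to be found, and I would extract it by the same trimming bookkeeping as in that lemma. In the type-1 case, if $s_{n+2}\neq1$ then lower trimming deletes the two $1$'s and makes $\vee_S(n)=s_{n+2}-1$, so $\vee_S=S$ forces $s_{n+2}=n+1$; and $s_{n+2}=1$ is impossible, since a repeated $1$ would give $\wedge_S(2)=1\neq2$. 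In the type-2 case the two identities combine to force $s_{n+2}=1$, because $\vee_S=S$ permits only $s_{n+2}\in\{1,2\}$ while $\wedge_S=S$ forbids the value $2$. Hence the first $n+2$ terms are either $(1,2,\ldots,n,1,n+1)$ or $(\underbrace{1,\ldots,1}_n,2,1)$.

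Next I would realize each of these two words as the prefix of a signature. For a fixed word $w$ of length $n+2$, the statement ``$S_\theta$ begins with $w$'' depends only on the relative order of the finitely many smallest elements $i+j\theta$ of $M_\theta$, and each pairwise comparison $i+j\theta<i'+j'\theta$ is a single linear inequality in $\theta$; the admissible set is therefore an intersection of half-lines, i.e. an interval, and it suffices to produce a nondegenerate one. For the type-1 word I would take $\theta\in(n-1,n)$: then $1+\theta<\cdots<n+\theta$ are the $n$ smallest elements, the bound $n-1<\theta<n$ places $1+2\theta$ between $n+\theta$ and $(n+1)+\theta$, and $(n+1)+\theta$ follows, so the first coordinates read $1,2,\ldots,n,1,n+1$. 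For the type-2 word I would take $\theta\in(1/n,1/(n-1))$: then $1/\theta\in(n-1,n)$ guarantees that exactly the $n$ terms $1+\theta<\cdots<1+n\theta$ precede $2+\theta$, after which $1+(n+1)\theta$ is the next smallest, giving $\underbrace{1,\ldots,1}_n,2,1$.

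Finally, I would restrict $\theta$ to the irrationals of the relevant interval, which removes every tie in $M_\theta$ and makes the ordering, and hence the length-$(n+2)$ prefix, exactly the one computed above for each such $\theta$. Each interval contains uncountably many irrationals, and by Theorem \ref{6} distinct parameters yield distinct signatures, so infinitely many signatures share the prescribed initial segment. I expect the crux to be this second step: one must verify not merely that the intended elements occur in the intended order, but that no other $i+j\theta$ slips into the first $n+2$ positions anywhere in the interval, and it is precisely to block such intrusions that the bounds $n-1<\theta<n$ and $1/n<\theta<1/(n-1)$ are needed.
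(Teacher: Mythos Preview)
Your argument is correct and follows the same core strategy as the paper's proof: for each of the two possible initial blocks, exhibit a nondegenerate interval of $\theta$ whose signature begins with that block. You are, however, more careful than the paper on several points: you explicitly derive $s_{n+2}$ from the trimming identities (the paper leaves this implicit, relying on its construction procedure), you restrict to irrational $\theta$ to eliminate ties in $M_\theta$, and you invoke Theorem~\ref{6} to conclude that the resulting signatures are genuinely distinct. One further remark: your type-2 interval $(1/n,\,1/(n-1))$ is the one actually consistent with the chain $1+n\theta<2+\theta<1+(n+1)\theta$; the paper states the interval as $[1/(n+1),\,1/n]$, which does not match those inequalities and appears to be a slip.
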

\begin{proof}
For any real number $n-1\leq \theta \leq n$, we have the relations
\beq
1+\theta<2+\theta<\ldots<n+\theta\leq1+2\theta\leq n+1+\theta
\eeq
in the construction of its signature sequence. So the initial segment of the signature sequence of each $n-1 \leq \theta \leq n$ will be $(1,2,3,\ldots,n,1,n+1)$. In the case of $\frac{1}{n+1} \leq \theta \leq \frac{1}{n}$, the inequalities are in the form:
\beq
1+\theta<1+2\theta<\ldots<1+n\theta\leq2+\theta\leq 1+(n+1)\theta.
\eeq
\end{proof}
\begin{thm}\label{9}
Every doubly fractal sequence $S$ can be made by the construction procedure explained in \S\ref{The Construction}. Also every sequence made by that procedure is doubly fractal.
\end{thm}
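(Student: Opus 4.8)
The plan is to prove Theorem \ref{9} in two directions, matching its two assertions. For the ``forward'' direction (every doubly fractal sequence arises from the procedure), I would argue by strong induction on the length of the initial segment that is already determined, showing that at each stage the defining constraints $\wedge_S = \vee_S = S$ force exactly the terms the procedure produces, so the procedure is not merely \emph{a} way to extend but the \emph{only} way. I would begin by invoking Lemma \ref{4} to reduce to the type-1 initial segment $(1,2,\ldots,n,1)$, since the type-2 case is handled by the explicit translation formula $S(k)=\#\{s'_i \mid s'_i = s'_k,\ 1\le i\le k\}$ given at the end of \S\ref{The Construction}; I would verify separately that this translation is a bijection between the two families so that proving one case suffices. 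For the ``backward'' direction (every sequence the procedure makes is doubly fractal), I would maintain the invariant that after each block is appended the finite sequence satisfies the trimming relations on the portion that is already completely determined, and then pass to the limit.

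The technical heart of both directions is understanding why the merge operation producing $P$ from $t$ and $t'$, together with the shift parameter $d = Indx_P(1) - Indx_P(\ell_S)$, is exactly what the self-similarity demands. Here I would lean on Proposition \ref{7}, which tells us that $t$ and $t'$ agree except that $t$ carries a single $\ell_S$ and $t'$ carries a single $1$. The key structural claim to isolate and prove is the following: upper-trimming removes precisely the first occurrences, and in the block structure these first occurrences are the newly inserted $\ell_S$ terms, so $\wedge_S$ restricted to a block recovers the previous block; symmetrically, lower-trimming subtracts one and deletes the resulting zeros, which removes exactly the main $1$ terms and decrements everything else, and one checks this also recovers the previous block. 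I would formalize these as two lemmas, one for each trimming operation, stated in terms of the index functions $I_k(\cdot)$, and I expect that Properties (II) and (III) referenced in the proof of Proposition \ref{7} (the compatibility of the two trimmings with the block indexing) are the precise statements I need; I would restate them explicitly since the excerpt invokes them without display.

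The main obstacle I anticipate is the bookkeeping around the displacement $d$ and the insertion condition $I_B(1) \le Indx_S(\ell_S) \le I_B(n)$, because the position at which each new $\ell_S$ is inserted depends on the previous block in a way that shifts from block to block, and I must show this shift is \emph{consistent} with both trimming operations simultaneously rather than just one. Concretely, the difficulty is that upper-trimming is sensitive to which occurrence of a value is first (a combinatorial condition on the whole prefix), whereas lower-trimming is a purely local arithmetic operation, and the parameter $d$ is the single quantity that must reconcile these two different notions of ``where the new symbol goes.'' I would therefore prove, as the crucial lemma, that the value of $d$ computed from $P$ equals the offset forced by the lower-trimming relation, and independently equals the offset forced by the upper-trimming relation, so that a single insertion rule satisfies both. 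Once that reconciliation lemma is in hand, the induction closes: uniqueness in the forward direction follows because any deviation from the procedure's insertion would break one of the two trimming identities, and correctness in the backward direction follows because the invariant is preserved. Finally, I would combine this with Theorem \ref{6} and Lemma \ref{8} to pin down which $\theta$ yields each constructed sequence, thereby identifying the constructed family with the signature sequences and completing Kimberling's conjecture.
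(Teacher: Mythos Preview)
Your plan is essentially the paper's own approach: induction block by block, with Lemma~\ref{4} supplying the base case and the two trimming identities (what the paper calls Properties (II) and (III)) together with Proposition~\ref{7} forcing the merged sequence $P$ and the displacement $d$, so that the $(k+1)$st block is uniquely determined by the $k$th. The only extraneous element is your final sentence invoking Theorem~\ref{6} and Lemma~\ref{8}; that belongs to the proof of Theorem~\ref{10}, not to Theorem~\ref{9}.
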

\begin{proof}
The conclusion will be obtained by induction. Let $S$ be an arbitrary doubly fractal sequence. By Lemma \ref{4}, the initial block of every doubly fractal sequence is either $(1,2,3,...,n,1)$ or $(1,1,1,\ldots,1,2)$. This coincides with the initial assumption in our construction. Now suppose that until the $k$th occurrence of the integer $n$, the sequence $S$ coincides with the sequence obtained by $k$th extension steps of the procedure of \S\ref{The Construction}. Define by $\mathfrak{P}_k(n)=(S(I_k(n)),\ldots,S(I_{k+1}(n)-1))$ the {\it $k$-th part} of $S$. Since $S$ is assumed to be doubly fractal, then Property (II) implies that:
\begin{eqnarray}\label{part}
\wedge_{\mathfrak{P}_{k+1}(n)}=\mathfrak{P}_k(n).
\end{eqnarray}
So the first term of the $(k+1)$st part is $n$ and all other of the main terms must appear in $(k+1)$st part. Since the $k$th part contains a term $n+1$, so by Property (II), the $(k+1)$st part should contain $n+1$. Let $\pi$ denote the terms appeared between $n$ and $n+1$ in the $(k+1)$st part of $S$. From the construction rule of $t'$ in \S3, one $1$ appears between $n$ and $n+1$ in the $k$th part of $S$, so $1$ should also appear in $\pi$. Property (III) implies that all the terms of $\pi$ must be lower-trimmed to terms between $(k+1)$st term $n-1$ and $(k+1)$st term $n$ (which was the $t$ sequence formed in the procedure \S3). There is one $\mathcal L_S$ in $t$ in $k$th part, so $\mathcal{L}_S+1$ should appear in $\pi$. So the merging sequence of $t$ and $t'$ as described in the procedure \S3 will exactly be the same as the $\pi$ sequence and the distance of $1$ and $\mathcal L_S+1$ in $\pi$ is equal to the $d$ defined in \S3. The rule of allocating the new $\ell_S$'s in the construction procedure assures that the distance of them from the main terms satisfy the Properties (II) and (III).
\end{proof}
\begin{thm}[Main Theorem] \label{10}
Each sequence $S=(s_1,s_2,s_3,\ldots)$ constructed by the procedure explained in \S\ref{The Construction} is a signature one. In other words, every doubly fractal sequence is a signature sequence.
\end{thm}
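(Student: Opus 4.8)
The plan is to show that every sequence $S$ produced by the construction procedure is the signature of some real number $\theta$. Since the procedure is driven entirely by the ordering constraints encoded in Properties (II) and (III)---the same constraints that govern how signatures behave under upper and lower trimming---the natural strategy is to build $\theta$ as the limit of a nested sequence of intervals, each interval being the set of reals whose signature agrees with $S$ on a longer and longer initial segment. First I would invoke Lemma~\ref{8} as the base case: the first $n+2$ terms of $S$ already determine a nonempty interval of candidate values of $\theta$ (either a subinterval of $[n-1,n]$ for the type-1 initial segment, or of $[\tfrac{1}{n+1},\tfrac{1}{n}]$ for the type-2 case). This gives the starting interval $I_0$.

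Next I would argue inductively that each extension step of the procedure corresponds to a refinement of the current candidate interval. The key observation is that deciding the relative order of two elements $i+j\theta$ and $g+h\theta$ of the multiset $M_\theta$ amounts to checking the sign of a linear expression $(g-i)+(h-j)\theta$, exactly as in the proof of Theorem~\ref{6}. Each new comparison that the construction forces---placing the freshly introduced value $\ell_S$ at its prescribed position determined by the offset $d$---imposes one more linear inequality $(g-i)+(h-j)\theta > 0$ (or $<0$) on $\theta$. Such an inequality cuts the current interval $I_k$ at a single rational endpoint $\tfrac{i-g}{h-j}$, yielding a nonempty subinterval $I_{k+1}\subseteq I_k$ (nonempty because the construction was shown in Theorem~\ref{9} to be consistent with the doubly-fractal ordering, hence not self-contradictory). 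Taking $\theta^\ast \in \bigcap_{k} I_k$, the resulting signature $S_{\theta^\ast}$ agrees with $S$ on every initial segment, so $S_{\theta^\ast}=S$.

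The main obstacle I anticipate is establishing that the nested intervals never collapse to the empty set, and more delicately, that when $\theta$ must be taken \emph{rational} (so that $M_\theta$ carries genuine duplicates, as in the type-2 case) the construction's choice between the two orderings $(1,\ell_S)$ and $(\ell_S,1)$ at a tie is consistent with a single admissible value of $\theta$. In the rational case an equality $(g-i)+(h-j)\theta = 0$ pins $\theta$ down exactly rather than merely bounding it, so I would need to verify that the sequence of equalities forced by repeated ties is mutually compatible---equivalently, that they all encode the same rational $\theta$. This is precisely the point where the structural rigidity established in Proposition~\ref{7} (that $t$ and $t'$ differ only in the single entries $\ell_S$ and $1$) does the work: it guarantees that each tie-breaking decision is forced rather than free, so no contradictory constraint on $\theta$ can arise.

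Finally, I would close the argument by combining both directions. Theorem~\ref{9} already shows the procedure generates exactly the doubly fractal sequences; the construction above shows each such sequence equals $S_{\theta^\ast}$ for the limiting $\theta^\ast$; and Proposition~\ref{dsprop} together with Theorem~\ref{6} confirms that these signatures are themselves doubly fractal and pairwise distinct. Hence the map $\theta \mapsto S_\theta$ is a bijection between positive reals and doubly fractal sequences, which is exactly Kimberling's conjecture.
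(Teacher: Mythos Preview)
Your nested-interval framework is a reasonable reformulation of the problem, and it is in fact logically equivalent to the paper's contradiction via a ``longest matching prefix'': both reduce to the inductive claim that every finite initial segment of $S$ is realized as the initial segment of some signature $S_\theta$. The paper assumes this fails at some first index $m+1$ and derives a contradiction; you try to show directly that each $I_{k+1}$ is nonempty. Structurally the two are the same induction.

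The genuine gap is in your inductive step. You write that $I_{k+1}$ is nonempty ``because the construction was shown in Theorem~\ref{9} to be consistent with the doubly-fractal ordering, hence not self-contradictory.'' But Theorem~\ref{9} only certifies that the construction yields $\wedge_S=\vee_S=S$; it says nothing about the solvability of the system of linear inequalities in $\theta$ that defines $I_{k+1}$. Consistency of the trimming rules and consistency of the inequalities $s_i+a_i\theta\le s_{i+1}+a_{i+1}\theta$ are two different things, and it is precisely the passage from the first to the second that the theorem is asserting. As stated, your argument is circular at this point.

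The paper supplies exactly the missing idea. At the branching step one has two options for $s_{m+1}$, namely $1$ or $\ell_S$. If $\theta'$ is any witness that $I_m\ne\emptyset$, its signature $S'$ is itself doubly fractal (Proposition~\ref{dsprop}) and hence, by Theorem~\ref{9}, follows the same construction: $S'$ agrees with $S$ through position $m$ and then takes one of the two branches. If it takes the same branch as $S$, then $\theta'\in I_{m+1}$ and you are done. If it takes the other branch---say $s_{m+1}=1$ while $s'_{m+1}=\ell_S$---then doubly-fractality forces $s'_{m+2}=1$, and the chain of inequalities encoding $(s_1,\dots,s_m,1)$ is literally a sub-chain of the one encoding $(s_1,\dots,s_m,\ell_S,1)$, which $\theta'$ already satisfies. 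Either way $I_{m+1}\ne\emptyset$. This comparison against an \emph{actual} signature sequence taking the opposite branch is the crux, and it is absent from your sketch. Your invocation of Proposition~\ref{7} does not substitute for it: that proposition controls the structure of $t$ and $t'$, not the satisfiability of inequalities in $\theta$; and in fact when $Indx_t(\ell_S)=Indx_{t'}(1)$ the procedure explicitly allows \emph{either} ordering, so the tie-breaking is not ``forced'' as you claim.

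A secondary issue you should also address: even once each $I_k$ is shown nonempty, you need the intersection $\bigcap_k I_k$ to be nonempty. Since for irrational $\theta$ the defining inequalities are strict, the $I_k$ are open intervals, and a nested sequence of nonempty open intervals can have empty intersection. You must argue either that the closures still work (the limiting $\theta^\ast$ gives the correct signature even if some inequalities become equalities) or control the interval lengths.
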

\begin{proof}
If there exists a positive real number $\theta$ such that $S_\theta=S$, we are done. On the contrary, suppose there is not any real number $\theta$ such that its signature sequence coincides with $S$. Suppose $\{1,2,\ldots,n\}$ are the main terms of $S$. By Lemma \ref{8}, first $n+2$ terms of $S$ is the initial segment of infinitely many signature sequences. Let $S_m$, $m\geq n+2$, be the longest initial segment of $S$ which is the initial segment of some signature sequence $S'$. So $s_{m+1}\neq s'_{m+1}$. By Proposition \ref{dsprop}, $S'$ is also doubly fractal, therefore one could assign $1$ or $\ell_S$ in the construction step of $s_{m+1}$, by Theorem \ref{9}.
Without lose of generality assume that $s_{m+1}=1$. The case $s_{m+1}=\ell_S$ is similar. According to the doubly fractality of $S'$, the term after $\ell_S$ in $S'$ must be $1$. So $S'_{m+2}=(s_1,\ldots,s_m,\ell_S,1)$ is the initial segment of $S'$. Since $S'_{m+2}$ is the initial segment of the signature sequence $S'$, they satisfy the inequalities
\begin{eqnarray}\label{ineq1}
s'_1+a'_1\theta \leq s'_2+a'_2\theta \leq \ldots \leq s'_m+a'_m\theta \leq \ell_S+\theta \leq 1+a'_{m+2}\theta
\end{eqnarray}
in which for $1\leq i\leq m+1$, $a'_i$ is the number of iterations of $s'_i$ in $S'_i$. By the assumption, since $S_{m+1}$ is not the initial segment of any signature sequence, one of the inequalities
\begin{eqnarray}\label{ineq2}
s_1+a_1\theta \leq s_2+a_2\theta \leq \ldots \leq s_m+a_m\theta \leq 1+a_{m+1}\theta
\end{eqnarray}
must be unsatisfied. Since $a_1=a'_1, a_2=a'_2,\ldots, a_m=a'_m, a_{m+1}=a'_{m+2}$, unsatisfiability of  (\ref{ineq2}) contradicts the satisfiability of (\ref{ineq1}).
\end{proof}

\begin{biog}
\item[Matin Amini](amini@helli.ir)
\begin{affil}
National Organization for Development of Exceptional Talents (NODET), Tehran, Iran\\
amini@helli.ir
\end{affil}
\end{biog}
\begin{biog}
\item[Majid Jahangiri](jahangiri@ipm.ir)
\begin{affil}
School of Mathematics, Institute for Research in Fundamental Sciences (IPM), Tehran, Iran\\
jahangiri@ipm.ir
\end{affil}
\end{biog}
\vfill\eject

\begin{thebibliography}{6}
\bibitem{1} Clark Kimberling, Fractal Sequences and Interspersions \textit{Ars Combin.} \textbf{44} (1997), 157--168.
\bibitem{2} Clark Kimberling, Interspersions and Dispersions \textit{Proc. Amer. Math. Soc.} \textbf{117.2} (1993), 313--321.
\bibitem{3} Clark Kimberling, Numeration Systems and Fractal Sequences \textit{Acta Arith.} \textbf{73} (1995), 103--117.
\bibitem{4} Clark Kimberling and John E. Brown, Partial Complements and Transposable Dispersions \textit{J. Integer Seq.} \textbf{7} (2004).
\bibitem{5} Clark Kimberling and A. S. Fraenkel, Generalized Wythoff arrays, shuffles and interspersions \textit{Discrete Mathematics} \textbf{126} (1994), 137--149.
\bibitem{6} Clark Kimberling and H. S. Shultz, Card sorting by dispersions and fractal sequences \textit{Ars Combin.} \textbf{53} (1999), 209--218.
\bibitem{7} Franklin T. Adams-Watters, https://oeis.org/A002260/a002260.txt.
\end{thebibliography}
\end{document}